\documentclass{amsart}
\usepackage{amsmath}
\usepackage{amscd}
\usepackage{amssymb}
\usepackage{amsfonts}
\newtheorem{theorem}{Theorem}[section]
\newtheorem{lemma}[theorem]{Lemma}
\newtheorem{corollary}[theorem]{Corollary}

\theoremstyle{definition}
\newtheorem{definition}[theorem]{Definition}

\theoremstyle{remark}
\newtheorem{remark}[theorem]{Remark}
\numberwithin{equation}{section}


\begin{document}
\title[A nonlinear diffusion equation on complete manifolds]
{Gradient estimates for a nonlinear diffusion equation on complete
manifolds}
\author{Jia-Yong Wu}
\address{Department of Mathematics, East China Normal
University, Shanghai, China 200241}
 \email{jywu81@yahoo.com}

\date{January 1, 2009.}
\subjclass[2000]{Primary 58J35; Secondary 58J35, 58J05.\quad Chinese
Library Classification: O175.26; O186.12}

\keywords{local gradient estimate; nonlinear diffusion equation;
Bakry-\'{E}mery Ricci curvature}

\thanks{}

\begin{abstract}
Let $(M,g)$ be a complete non-compact Riemannian manifold with the
$m$-dimensional Bakry-\'{E}mery Ricci curvature bounded below by a
non-positive constant. In this paper, we give a localized
Hamilton-type gradient estimate for the positive smooth bounded
solutions to the following nonlinear diffusion equation
\[
u_t=\Delta u-\nabla\phi\cdot\nabla u-au\log u-bu,
\]
where $\phi$ is a $C^2$ function, and $a\neq0$ and $b$ are two real
constants. This work generalizes the results of Souplet and Zhang
(Bull. London Math. Soc., 38 (2006), pp. 1045-1053) and Wu
(Preprint, 2008).
\end{abstract}
\maketitle

\section{Introduction}\label{Int1}
Let $(M,g)$ be an $n$-dimensional non-compact Riemannian manifold
with the $m$-dimensional Bakry-\'{E}mery Ricci curvature bounded
below. Consider the following diffusion equation:
\begin{equation}\label{equ1}
u_t=\Delta u-\nabla \phi\cdot\nabla u-a u\log u- b u
\end{equation}
in $B(x_0,R)\times[t_0-T,t_0]\subset M\times(-\infty,\infty)$, where
$\phi$ is a $C^2$ function, and $a\neq0$ and $b$ are two real
constants. Eq. (\ref{equ1}) is closely linked with the gradient
Ricci solitons, which are the self-similar solutions to the Ricci
flow introduced by Hamilton \cite{[Hamilton]}. Ricci solitons have
inspired the entropy and Harnack estimates, the space-time
formulation of the Ricci flow, and the reduced distance and reduced
volume.

Below we recall the definition of Ricci solitons (see also Chapter 4
of \cite{[CLN]}).
\begin{definition}
A Riemannian manifold $(M,g)$ is called a \emph{gradient Ricci
soliton} if there exists a smooth function $f: M\rightarrow
\mathbb{R}$, sometimes called \emph{potential function}, such that
for some constant $c\in\mathbb{R}$, it satisfies
\begin{equation}\label{soliton}
Ric(g)+\nabla^{g}\nabla^{g} f=cg
\end{equation}
on $M$, where $Ric(g)$ is the Ricci curvature of manifold $M$ and
$\nabla^{g}\nabla^{g} f$ is the Hessian of $f$. A soliton is said to
be \emph{shrinking}, \emph{steady} or \emph{expanding} if the
constant $c$ is respectively positive, zero or negative.
\end{definition}

Suppose that $(M,g)$ be a gradient Ricci soliton, and $c$, $f$ are
described in Definition A. Letting $u=e^f$, under some curvature
assumptions, we can derive from (\ref{soliton}) that (cf.
\cite{[Ma]}, Eq. (7))
\begin{equation}\label{yinyong}
\Delta u+ 2cu \log u=(A_0-nc)u,
\end{equation}
for some constant $A_0$. Eq. (\ref{yinyong}) is a nonlinear elliptic
equation and a special case of Eq. (\ref{equ1}). For this kind of
equations, Ma (see Theorem 1 in \cite{[Ma]}) obtained the following
result.

\vspace{0.5em}

\noindent \textbf{Theorem A} (\cite{[Ma]}). \emph{Let $(M,g)$ be a
complete non-compact Riemannian manifold of dimension $n\geq3$ with
Ricci curvature bounded below by the constant $-K:=-K(2R)$, where
$R>0$ and $K(2R)\geq0$, in the metric ball $B_{2R}(p)$. Let $u$ be a
positive smooth solution to the elliptic equation
\begin{equation}\label{ellp}
\Delta u-au \log u=0
\end{equation}
with $a>0$. Let $f=\log u$ and let $(f, 2f)$ be the maximum among
$f$ and $2f$. Then there are two uniform positive constant $c_1$ and
$c_2$ such that
\begin{equation}
\begin{aligned}\label{Li Ma}
&|\nabla f|^2-a (f,2f)\\
\leq&
\frac{n\Big[(n+2)c^2_1+(n-1)c^2_1(1+R\sqrt{K})+c_2\Big]}{R^2}+2n\Big(|a|+K\Big)
\end{aligned}
\end{equation}
in $B_R(p)$.}

\vspace{0.5em}

Then Yang (see Theorem 1.1 in \cite{[Yang]}) extended the above
result and obtained the following local gradient estimate for the
nonlinear equation (\ref{equ1}) with $\phi\equiv c_0$, where $c_0$
is a fixed constant.

\vspace{0.5em}

\noindent \textbf{Theorem B} (\cite{[Yang]}). \emph{Let $M$ be an
$n$-dimensional complete non-compact Riemannian Manifold. Suppose
the Ricci curvature of $M$ is bounded below by $-K:=-K(2R)$, where
$R>0$ and $K(2R)\geq0$, in the metric ball $B_{2R}(p)$. If $u$ is a
positive smooth solution to Eq. (\ref{equ1}) with $\phi\equiv c_0$
on $M\times[0,\infty)$ and $f=\log u$, then for any $\alpha>1$ and
$0<\delta<1$
\begin{equation}
\begin{aligned}\label{Yang}
&|\nabla f|^2(x,t)-\alpha af(x,t)-\alpha b-\alpha f_t(x,t)\\
\leq& \frac{n\alpha^2}{2\delta
t}+\frac{n\alpha^2}{2\delta}\Bigg\{\frac{2\epsilon^2}{R^2}+\frac{\nu}{R^2}
+\sigma+\frac{\epsilon^2}{R^2}(n-1)\left(1+R\sqrt{K(2R)}\right)\\
&+\frac{K(2R)}{\alpha-1}+\frac{n\alpha^2\epsilon^2}{8(1-\delta)(\alpha-1)R^2}\Bigg\}
\end{aligned}
\end{equation}
in $B_R(p)\times(0,\infty)$, where $\epsilon>0$ and $\nu>0$ are some
constants and where $\sigma=a/2$ if $a>0$; $\sigma=-a$ if $a<0$.}

\vspace{0.5em}

Recently, the author (see Theorem 1.1 in \cite{[wu]}) used
Souplet-Zhang's method in \cite{[Sou-Zh]} and obtained a localized
Hamilton-type gradient estimate for the positive smooth bounded
solutions of the equation (\ref{equ1}) with $\phi\equiv c_0$.

\vspace{0.5em}

\noindent \textbf{Theorem C} (\cite{[wu]}). \emph{Let $(M,g)$ be an
$n$-dimensional non-compact Riemannian manifold with $Ric(M)\geq-K$
for some constant $K\geq0$. Suppose that $u(x,t)$ is a positive
smooth solution to the parabolic equation ({\ref{equ1}}) with
$\phi\equiv c_0$ in $Q_{R,T}\equiv B(x_0,R)\times[t_0-T,t_0]\subset
M\times(-\infty,\infty)$. Let $f:=\log u$. We also assume that there
exists non-negative constants $\alpha$ and $\delta$ such that
$\alpha-f \geq\delta>0$. Then there exist three dimensional
constants $\tilde{c}$,  $c(\delta)$ and $c(\alpha,\delta)$ such that
\begin{equation}\label{theor}
\frac{|\nabla u|}{u}\leq
\left(\frac{\tilde{c}}{R}\beta{+}\frac{c(\alpha,\delta)}{R}
{+}\frac{c(\delta)}{\sqrt{T}}{+}c(\delta)\left(|a|+K\right)^{1/2}\kern-2pt
{+}c(\delta)|a|^{1/2}\beta^{1/2}\right)\left(\alpha{-}\frac
ba{-}\log u\right)
\end{equation}
in $Q_{R/2, T/2}$, where
$\beta:=\max\left\{1,|\alpha/\delta-1|\right\}$.}

\vspace{0.5em}

The purpose of this paper is to extend Theorem C to the general
nonlinear diffusion equation (\ref{equ1}) via the $m$-dimensional
Bakry-\'{E}mery Ricci curvature.

Let us first  recall some facts about the $m$-dimensional
Bakry-\'{E}mery Ricci curvature (please see \cite{[Ba],[BE],
[BE2],[LD]} for more details). Given an $n$-dimensional Riemannian
manifold $(M, g)$ and a $C^2$ function $\phi$, we may define a
symmetric diffusion operator $ L:=\Delta-\nabla\phi\cdot\nabla,$
which is the infinitesimal generator of the Dirichlet form
\[
\mathcal {E}(f,g)=\int_M (\nabla f, \nabla g)\mathrm{d}\mu,
\,\,\,\forall f, g\in C_0^{\infty}(M),
\]
where $\mu$ is an invariant measure of $L$ given by $
\mathrm{d}\mu=e^{-\phi}\mathrm{d}x. $ It is well-known that $L$ is
self-adjoint with respect to the weighted measure $\mathrm{d}\mu$.

The $\infty$-dimensional Bakry-\'{E}mery Ricci curvature $Ric(L)$ is
defined by
\[
Ric(L):=Ric +Hess(\phi),
\]
where $Ric$ and $Hess$ denote the Ricci curvature of the metric $g$
and the Hessian respectively. Following the notation used in
\cite{[LD]}, we also define the $m$-dimensional Bakry-\'{E}mery
Ricci curvature of $L$ on an $n$-dimensional Riemaniann manifold as
follows
\[
Ric_{m,n}(L):= Ric(L)-\frac{\nabla \phi \otimes \nabla\phi}{m-n},
\]
where $m := \mathrm{dim}_{BE}(L)$ is called the Bakry-\'{E}mery
dimension of $L$. Note that the number $m$ is not necessarily to be
an integer and $m \geq n=\mathrm{dim} M$.

The main result of this paper can be stated in the following:
\begin{theorem}\label{main}
Let $(M,g)$ be an n-dimensional non-compact Riemannian manifold with
$Ric_{m,n}(L)\geq-K$ for some constant $K\geq0$. Suppose that
$u(x,t)$ is a positive smooth solution to the diffusion equation
({\ref{equ1}}) in $Q_{R,T}\equiv B(x_0,R)\times[t_0-T,t_0]\subset
M\times(-\infty,\infty)$. Let $f:=\log u$. We also assume that there
exists non-negative constants $\alpha$ and $\delta$ such that
$\alpha-f \geq\delta>0$. Then there exist three dimensional
constants $\tilde{c}$,  $c(\delta)$ and $c(\alpha,\delta,m)$ such
that
\begin{equation}\label{theor2}
\frac{|\nabla u|}{u}\leq
\left(\frac{\tilde{c}}{R}\beta{+}\frac{c(\alpha,\delta,m)}{R}
{+}\frac{c(\delta)}{\sqrt{T}}{+}c(\delta)\left(|a|+K\right)^{1/2}\kern-3pt
{+}c(\delta)|a|^{1/2}\beta^{1/2}\right)\left(\alpha{-}\frac
ba{-}\log u\right)
\end{equation}
in $Q_{R/2, T/2}$, where
$\beta:=\max\left\{1,|\alpha/\delta-1|\right\}$.
\end{theorem}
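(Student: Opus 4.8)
The plan is to run the Souplet--Zhang barrier argument used for Theorem C, now for the weighted operator $L=\Delta-\nabla\phi\cdot\nabla$ and the $m$-dimensional Bakry--\'{E}mery curvature. Since $a\neq0$, I would first fix notation: set $f:=\log u$ and $g:=\alpha-\tfrac ba-f$. From (\ref{equ1}) one computes $f_t=Lf+|\nabla f|^2-af-b$, hence
\[
(L-\partial_t)g=|\nabla f|^2-a\alpha+ag ,
\]
and, by the hypothesis $\alpha-f\geq\delta$ (and, if $b/a>0$, possibly after enlarging $\alpha$, exactly as for Theorem C), we may assume $g\geq\delta>0$ on $Q_{R,T}$. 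The quantity to estimate is $w:=|\nabla f|^2/g^2$, and the target (\ref{theor2}) is equivalent to $w\leq C(\dots)$ on $Q_{R/2,T/2}$, since $|\nabla u|/u=|\nabla f|=\sqrt w\,g$ and $g=\alpha-\tfrac ba-\log u$.

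The analytic heart is a differential inequality for $w$. I would use the Bakry--\'{E}mery Bochner identity
\[
\tfrac12 L|\nabla f|^2=|Hess\,f|^2+Ric(L)(\nabla f,\nabla f)+\langle\nabla f,\nabla Lf\rangle ,
\]
estimate $Ric(L)(\nabla f,\nabla f)=Ric_{m,n}(L)(\nabla f,\nabla f)+\tfrac{(\nabla\phi\cdot\nabla f)^2}{m-n}\geq -K|\nabla f|^2$, and --- this is the crucial point --- bound the Hessian term by the \emph{directional} second derivative, $|Hess\,f|^2\geq\big(Hess\,f(\tfrac{\nabla f}{|\nabla f|},\tfrac{\nabla f}{|\nabla f|})\big)^2=\langle\nabla|\nabla f|^2,\nabla f\rangle^2/(4|\nabla f|^4)$, \emph{not} by $(\Delta f)^2/n$. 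Inserting the equation for $f$, using $\nabla g=-\nabla f$ and $|\nabla g|^2=|\nabla f|^2=wg^2$, and collecting terms, I expect
\[
(L-\partial_t)w\;\geq\;2g\,w^2+\Big(\tfrac2g-2\Big)\langle\nabla f,\nabla w\rangle-2Kw+\tfrac{2a\alpha}{g}\,w .
\]
The decisive feature is the non-degenerate good term $2gw^2\geq2\delta w^2$: with $(\Delta f)^2/n$ the coefficient would only be $2(g-1)$, which vanishes as $g\to1$ and would be useless here since $g$ ranges over all of $[\delta,\infty)$.

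Next I would localize with a cut-off $\psi=\bar\eta(d(x,x_0)/R)\,\theta(t)$, where $\bar\eta$ is a standard decreasing function ($\bar\eta\equiv1$ on $[0,\tfrac12]$, $\mathrm{supp}\,\bar\eta\subset[0,1]$, $|\bar\eta'|^2\leq C\bar\eta$, $|\bar\eta''|\leq C$) and $\theta$ is supported in $(t_0-T,t_0]$ with $\theta\equiv1$ on $[t_0-\tfrac T2,t_0]$, $|\theta'|\leq C/T$. The weighted term $\nabla\phi$ is absorbed here via the Bakry--\'{E}mery Laplacian comparison: $Ric_{m,n}(L)\geq-K$ gives $Lr\leq\tfrac{m-1}{r}+\sqrt{(m-1)K}$ (in the barrier sense, the cut locus handled by Calabi's trick, both for $L\psi$ and for the maximum point below), whence $|L\psi|\leq C(m)\big(R^{-2}+R^{-1}\sqrt K\big)$ and $|\nabla\psi|^2/\psi\leq CR^{-2}$ on $\mathrm{supp}\,\psi$. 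Then apply the maximum principle to $\psi w$ on $B(x_0,R)\times[t_0-T,t_0]$: at a point $(x_1,t_1)$ where $\psi w$ attains its maximum (assumed positive, and by the choice of $\theta$ not on $\{t=t_0-T\}$) one has $\nabla(\psi w)=0$ and $(L-\partial_t)(\psi w)\leq0$. Substituting the differential inequality, replacing $\langle\nabla f,\nabla w\rangle$ by $-\tfrac w\psi\langle\nabla f,\nabla\psi\rangle$ and using $|\nabla f|=\sqrt w\,g$, absorbing the resulting $w^{3/2}g|\nabla\psi|/\psi$ term into $2g\psi w^2$ by Young's inequality, then dividing by $w$ and finally by $g$ (using $g\geq\delta$, $\psi\leq1$), should yield $(\psi w)(x_1,t_1)\leq C(m,\delta)\big(R^{-2}+R^{-1}\sqrt K+T^{-1}+K+|a|\alpha\big)$. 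Since $\psi\equiv1$ on $Q_{R/2,T/2}$ this bounds $w$ there; taking square roots, reorganising via $\sqrt{x+y}\leq\sqrt x+\sqrt y$ and splitting $R^{-1}\sqrt K$ by AM--GM, and tracking the $\alpha,\delta$-dependence through the Young's inequalities (which produces the factor $\beta=\max\{1,|\alpha/\delta-1|\}$), gives (\ref{theor2}).

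The step I expect to be the main obstacle is the derivation of the differential inequality for $w$ with the correct sign structure --- in particular, recognizing that the directional Hessian estimate (rather than the trace one) is what produces the non-degenerate $2g\,w^2$, and then carefully bookkeeping all contributions to the $w^2$-coefficient so that they add up to $2g$ and not $2(g-1)$. The remaining difficulties are comparatively routine: carrying $L$, $Ric_{m,n}(L)$ and the Bakry--\'{E}mery Laplacian comparison through an argument originally written for $\Delta$ and $Ric$, and the constant-chasing needed to pass from the schematic bound above to the stated form with $\tilde c$, $c(\delta)$ and $c(\alpha,\delta,m)$.
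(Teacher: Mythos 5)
Your proposal is correct and follows essentially the same route as the paper: the same test function $\omega=|\nabla f|^2/(\alpha-\tfrac ba-f)^2$ (the paper absorbs $b/a$ by replacing $u$ with $e^{-b/a}u$ rather than building it into the barrier), the same differential inequality as Lemma \ref{Le11} (your drift coefficient $\tfrac2g-2$ and zeroth-order terms $-2K+\tfrac{2a\alpha}{g}$ are exactly the paper's $\tfrac{2(1-\alpha)+2f}{\alpha-f}$ and $2(a-K)+\tfrac{2af}{\alpha-f}$ rewritten), and the same Souplet--Zhang cut-off/maximum-point argument with the Bakry--\'Emery Laplacian comparison and Young-inequality absorptions producing $\beta$. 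The only cosmetic differences are that the paper discards the Hessian contributions via the complete square $2\bigl|\tfrac{f_{ij}}{\alpha-f}+\tfrac{f_if_j}{(\alpha-f)^2}\bigr|^2\ge0$ (equivalent to your directional-Hessian bound) and bounds $\psi\omega^2$ before taking fourth roots instead of bounding $\psi\omega$ directly.
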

We make some remarks on the above theorem below.
\begin{remark}
(i). In Theorem \ref{main}, it seems that the assumption $\alpha-f
\geq\delta>0$ is reasonable. Because from this assumption, we can
get $u\leq e^{\alpha-\delta}$. We say that this upper bound of $u$
can be achieved in some setting. For example, from Corollary 1.2 in
\cite{[Yang]}, we know that positive smooth solutions to the
elliptic equation (\ref{ellp}) with $a<0$ have $u(x)\leq e^{n/2}$
for all $x\in M$ provided the Ricci curvature of $M$ is
non-negative.

(ii). Note that the theorem still holds if $m$-dimensional
Bakry-\'{E}mery Ricci curvature is replaced by $\infty$-dimensional
Bakry-\'{E}mery Ricci curvature. In fact this result can be obtained
by (\ref{gongshi}) in Section \ref{sec2}.

(iii). Theorem \ref{main} generalizes the above mentioned Theorem C.
When we choose $\phi\equiv c_0$, we return Theorem C. The proof of
our main theorem is based on Souplet-Zhang's gradient estimate and
the trick used in \cite{[wu]} with some modifications.
\end{remark}

In particular, if $u(x,t)\leq1$ is a positive smooth solution to the
diffusion equation ({\ref{equ1}}) with $a<0$, then we have a simple
estimate.
\begin{corollary}\label{Coro}
Let $(M,g)$ be an n-dimensional non-compact Riemannian manifold with
$Ric_{m,n}(L)\geq-K$ for some constant $K\geq0$. Suppose that
$u(x,t)\leq 1$ is a positive smooth solution to the diffusion
equation ({\ref{equ1}}) with $a<0$ in $Q_{R,T}\equiv
B(x_0,R)\times[t_0-T,t_0]\subset M\times(-\infty,\infty)$. Then
there exist two dimensional constants $c$ and $c(m)$ such that
\begin{equation}\label{cor}
\frac{|\nabla u|}{u}\leq \left(\frac
{c(m)}{R}+\frac{c}{\sqrt{T}}+c\sqrt{K+|a|}\right)\left(1-\frac
ba+\log{\frac1u}\right)
\end{equation}
in $Q_{R/2, T/2}$.
\end{corollary}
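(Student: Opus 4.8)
The plan is to derive Corollary~\ref{Coro} directly from Theorem~\ref{main} by specializing the free parameters. Since $u(x,t)\le 1$ on $Q_{R,T}$, the function $f=\log u$ satisfies $f\le 0$ everywhere, so the hypothesis $\alpha-f\ge\delta>0$ of Theorem~\ref{main} holds with the choice $\alpha=1$ and $\delta=1$; indeed $\alpha-f=1-f\ge 1=\delta>0$. With this choice one computes $\beta=\max\{1,|\alpha/\delta-1|\}=\max\{1,0\}=1$.

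Next I would substitute $\alpha=\delta=1$ and $\beta=1$ into estimate~(\ref{theor2}). The terms $\tilde c\,\beta/R$ and $c(\alpha,\delta,m)/R=c(1,1,m)/R$ combine into a single term of the form $c(m)/R$; the term $c(\delta)/\sqrt T=c(1)/\sqrt T$ becomes $c/\sqrt T$; and the two curvature terms $c(\delta)(|a|+K)^{1/2}$ and $c(\delta)|a|^{1/2}\beta^{1/2}=c(1)|a|^{1/2}$ are bounded, using $|a|^{1/2}\le(|a|+K)^{1/2}$, by $c(K+|a|)^{1/2}$ after renaming the constant. On the right-hand side the factor $\alpha-b/a-\log u$ becomes $1-b/a-\log u=1-b/a+\log(1/u)$. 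Collecting these observations yields exactly~(\ref{cor}) on $Q_{R/2,T/2}$.

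Since the argument is a direct specialization of Theorem~\ref{main}, there is no substantive obstacle; the only care needed is the elementary bookkeeping of absorbing the dimensional and parameter-dependent constants of~(\ref{theor2}) into the two constants $c$ and $c(m)$ appearing in~(\ref{cor}), together with the observations that the sign condition $a<0$ makes $b/a$ well defined and that $u\le 1$ supplies precisely the upper bound on $f$ that renders the choice $\alpha=\delta=1$ admissible.
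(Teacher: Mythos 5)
Your proof is correct, but it takes a genuinely different (and shorter) route than the paper's. You obtain Corollary \ref{Coro} as a literal specialization of Theorem \ref{main}: since $u\le 1$ gives $f=\log u\le 0$, the choice $\alpha=\delta=1$ is admissible, $\beta=\max\{1,|\alpha/\delta-1|\}=1$, and the five terms in (\ref{theor2}) collapse, using $|a|^{1/2}\le(|a|+K)^{1/2}$ and absorption of constants, into the three terms of (\ref{cor}). The paper instead reruns the entire cut-off/maximum-point argument of Section \ref{sec3} with $\alpha=1$: it invokes Lemma \ref{Le11} afresh, uses $a<0$ together with $f\le 0$ to discard the then-nonnegative term $2af(1-f)^{-1}\omega$ and to write $2(a-K)\omega=-2(|a|+K)\omega$ (yielding (\ref{corrformu})), and notes that because $1-f\ge 1$ none of the Young-inequality absorptions need a parameter $\delta$. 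Both routes land on the same form of estimate, and neither produces sharper constants than the other, so your derivation loses nothing; it even makes transparent that the inequality itself does not require $a<0$ (only $a\neq0$, which is assumed throughout the paper — the sign condition is rather the regime in which the a priori bound $u\le1$ is natural, cf. the remark following Theorem \ref{main}). One bookkeeping point, which affects the paper's own proof in exactly the same way and is therefore not a gap in yours: both arguments silently pass through the normalization $u\mapsto e^{b/a}u$ relating (\ref{equ1}) to (\ref{maineq}), so the boundedness hypothesis is formally imposed on the solution of (\ref{equ1}) while the interior computations are carried out for the rescaled solution; your specialization inherits precisely the theorem's convention and introduces no new issue.
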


\begin{remark}
We point out that our localized Hamilton-type gradient estimate can
be also regarded as the generalization of the result of
Souplet-Zhang \cite{[Sou-Zh]} for the heat equation on complete
manifolds. In fact, the above Corollary \ref{Coro} is similar to the
result of Souplet-Zhang (see Theorem 1.1 of \cite{[Sou-Zh]}). From
the inequality (\ref{zlihou}) below, we can conclude that if
$\phi\equiv c_0$ and $a=0$, then our result can be reduced to
theirs.
\end{remark}

The method of proving Theorem \ref{main} is the gradient estimate,
which is originated by Yau \cite{[Yau]} (see also Cheng-Yau
\cite{[Cheng-Yau]}), and developed further by Li-Yau
\cite{[Li-Yau]}, Li \cite{[LJY]} and Negrin \cite{[Negrin]}. Then R.
S. Hamilton \cite{[Hamilton2]} gave an elliptic type gradient
estimate for the heat equation. But this type of estimate is a
global result which requires the heat equation defined on closed
manifolds. Recently, a localized Hamilton-type gradient estimate was
proved by Souplet and Zhang \cite{[Sou-Zh]}, which can be viewed as
a combination of Li-Yau's Harnack inequality \cite{[Li-Yau]} and
Hamilton's gradient estimate \cite{[Hamilton2]}. In this paper, we
obtain a localized Hamilton-type gradient estimate for a general
diffusion equation ({\ref{equ1}}) as Souplet and Zhang in
\cite{[Sou-Zh]} did for the heat equation on complete manifolds. To
prove Theorem \ref{main}, we mainly follow the arguments of
Souplet-Zhang in \cite{[Sou-Zh]}, together with some facts about
Bakry-\'{E}mery Ricci curvature. Note that the diffusion equation
(\ref{equ1}) is nonlinear. So our case is a little more complicated
than theirs.

The structure of this paper is as follows. In Section \ref{sec2}, we
will give a basic lemma to prepare for proving Theorem \ref{main}.
Section \ref{sec3} is devoted to the proof of Theorem \ref{main}. In
Section \ref{Sec4}, we will prove Corollary \ref{Coro} in the case
$0<u\leq1$ with $a<0$.

\section{A basic lemma}\label{sec2}
In this section, we will prove the following lemma which is
essential in the derivation of the gradient estimate of the equation
(\ref{equ1}). Replacing $u$ by $e^{-b/a}u$, we only need to consider
positive smooth solutions of the following diffusion equation:

\begin{equation}\label{maineq}
u_t=\Delta u-\nabla \phi\cdot\nabla u-a u\log u.
\end{equation}
Suppose that $u(x,t)$ is a positive smooth solution to the diffusion
equation (\ref{equ1}) in $Q_{R,T}\equiv B(x_0,R)\times[t_0-T,t_0]$.
Define a smooth function
\[
f(x,t):=\log u(x,t)
\]
in $Q_{R,T}$. By (\ref{maineq}), we have
\begin{equation}\label{lemequ}
\left(L-\frac{\partial}{\partial t}\right)f+|\nabla f|^2-a f=0.
\end{equation}
Then we have the following lemma, which is a generalization of the
computation carried out in \cite{[Sou-Zh],[wu]}.
\begin{lemma}\label{Le11}
Let $(M,g)$ be an n-dimensional non-compact Riemannian manifold with
$Ric_{m,n}(L)\geq-K$ for some constant $K\geq0$. Let $f(x,t)$ is a
smooth function defined on $Q_{R,T}$ satisfying the diffusion
equation (\ref{lemequ}). We also assume that there exist
non-negative constants $\alpha$ and $\delta$ such that $\alpha-f
\geq\delta>0$. Then for all $(x,t)$ in $Q_{R,T}$ the function
\begin{equation}\label{lemmaequ2}
\omega:=\left|\nabla\log(\alpha-f)\right|^2=\frac{|\nabla
f|^2}{(\alpha-f)^2}
\end{equation}
satisfies the following inequality
\begin{equation}
\begin{aligned}\label{lemmaequ3}
&\left(L-\frac{\partial}{\partial
t}\right)\omega\\
\geq&\frac{2(1-\alpha)+2f}{\alpha-f}\left\langle \nabla
f,\nabla\omega\right\rangle
+2(\alpha-f)\omega^2+2(a-K)\omega+\frac{2af}{\alpha-f}\omega.
\end{aligned}
\end{equation}
\end{lemma}
\begin{proof}
By (\ref{lemmaequ2}), we have
\begin{equation}
\begin{aligned}\label{lemmpro1}
\omega_j=\frac{2f_i f_{ij}}{(\alpha-f)^2}+\frac{2f^2_i
f_j}{(\alpha-f)^3},
\end{aligned}
\end{equation}

\begin{equation}
\begin{aligned}\label{dgemf}
\Delta\omega=\frac{2|f_{ij}|^2}{(\alpha-f)^2}+\frac{2f_i
f_{ijj}}{(\alpha-f)^2}+\frac{8f_if_j
f_{ij}}{(\alpha-f)^3}+\frac{2f^2_i
f_{jj}}{(\alpha-f)^3}+\frac{6f^2_i f^2_j}{(\alpha-f)^4}
\end{aligned}
\end{equation}
and
\begin{equation}
\begin{aligned}\label{bake-em}
L\omega&=\Delta\omega-\phi_j\omega_j\\
&=\frac{2|f_{ij}|^2}{(\alpha{-}f)^2}+\frac{2f_i
f_{ijj}}{(\alpha{-}f)^2}+\frac{8f_if_j f_{ij}}{(\alpha{-}f)^3}
+\frac{2f^2_i f_{jj}}{(\alpha{-}f)^3}+\frac{6f^4_i}{(\alpha{-}f)^4}
-\frac{2f_{ij}f_i\phi_j}{(\alpha{-}f)^2}
-\frac{2f^2_if_j\phi_j}{(\alpha{-}f)^3}\\
&=\frac{2|f_{ij}|^2}{(\alpha{-}f)^2}+\frac{2f_i
(Lf)_i}{(\alpha{-}f)^2}+\frac{2(R_{ij}+\phi_{ij})f_if_j}{(\alpha{-}f)^2}+\frac{8f_if_j
f_{ij}}{(\alpha{-}f)^3} +\frac{2f^2_i\cdot
Lf}{(\alpha{-}f)^3}+\frac{6f^4_i}{(\alpha{-}f)^4},
\end{aligned}
\end{equation}
where $f_i:=\nabla_i f$ and $f_{ijj}:=\nabla_j\nabla_j\nabla_i f$,
etc. By (\ref{lemmaequ2}) and (\ref{lemequ}), we also have
\begin{equation}
\begin{aligned}\label{derivatt}
\omega_t&=\frac{2\nabla_if\cdot\nabla_i\Big[Lf+|\nabla
f|^2-af\Big]}{(\alpha-f)^2}+\frac{2|\nabla f|^2\Big[Lf+|\nabla
f|^2-af\Big]}{(\alpha-f)^3}\\
&=\frac{2\nabla f \nabla L
f}{(\alpha-f)^2}+\frac{4f_if_jf_{ij}}{(\alpha-f)^2}-\frac{2a|\nabla
f|^2}{(\alpha-f)^2}+\frac{2f^2_iL f}{(\alpha-f)^3}+\frac{2|\nabla
f|^4}{(\alpha-f)^3}-\frac{2af|\nabla f|^2}{(\alpha-f)^3}.
\end{aligned}
\end{equation}
Combining ({\ref{bake-em}}) with ({\ref{derivatt}}), we can get
\begin{equation}
\begin{aligned}\label{dge}
\left(L-\frac{\partial}{\partial
t}\right)\omega&=\frac{2|f_{ij}|^2}{(\alpha-f)^2}
+\frac{2(R_{ij}+\phi_{ij})f_if_j}{(\alpha-f)^2}+\frac{8f_if_j
f_{ij}}{(\alpha-f)^3}+\frac{6f^4_i}{(\alpha-f)^4}\\
&\,\,\,\,\,\,-\frac{4f_if_j
f_{ij}}{(\alpha-f)^2}-\frac{2f^4_i}{(\alpha-f)^3}
+\frac{2af^2_i}{(\alpha-f)^2}+\frac{2aff^2_i}{(\alpha-f)^3}.
\end{aligned}
\end{equation}
Noting that $Ric_{m,n}(L)\geq-K$ for some constant $K\geq0$, we have
\begin{equation}\label{gongshi}
(R_{ij}+\phi_{ij})f_if_j\geq\frac{|\nabla \phi\cdot\nabla
f|^2}{m-n}-K|\nabla f|^2\geq-K|\nabla f|^2.
\end{equation}
By (\ref{lemmpro1}),  we have
\begin{equation}
\begin{aligned}\label{ledcv}
\omega_jf_j=\frac{2f_if_j f_{ij}}{(\alpha-f)^2}+\frac{2f^2_i
f^2_j}{(\alpha-f)^3},
\end{aligned}
\end{equation}
and consequently,
\begin{equation}
\begin{aligned}\label{ledml}
0=-2\omega_jf_j+\frac{4f_if_j f_{ij}}{(\alpha-f)^2}+\frac{4f^4_i
}{(\alpha-f)^3},
\end{aligned}
\end{equation}
\begin{equation}
\begin{aligned}\label{levnmgf}
0=\frac{1}{\alpha-f}\left[2\omega_jf_j-\frac{4f^4_i}{(\alpha-f)^3}\right]
-\frac{4f_if_jf_{ij}}{(\alpha-f)^3}.
\end{aligned}
\end{equation}
Substituting (\ref{gongshi}) into (\ref{dge}) and then adding
(\ref{dge}) with (\ref{ledml}) and (\ref{levnmgf}), we can get
\begin{equation}
\begin{aligned}\label{dgenhjfy}
\left(L-\frac{\partial}{\partial
t}\right)\omega&\geq\frac{2|f_{ij}|^2}{(\alpha-f)^2}-\frac{2K|\nabla
f|^2}{(\alpha-f)^2}+\frac{4f_if_j
f_{ij}}{(\alpha-f)^3}+\frac{2f^4_i}{(\alpha-f)^4}+\frac{2f^4_i}{(\alpha-f)^3}\\
&\,\,\,\,\,\,+\frac{2(1-\alpha)+2f}{\alpha-f}f_i \omega_i
+\frac{2af^2_i}{(\alpha-f)^2}+\frac{2aff^2_i}{(\alpha-f)^3}.
\end{aligned}
\end{equation}
Note that $\alpha-f\geq\delta>0$ implies
\begin{equation*}
\begin{aligned}
\frac{2|f_{ij}|^2}{(\alpha-f)^2}+\frac{4f_if_j
f_{ij}}{(\alpha-f)^3}+\frac{2f^4_i}{(\alpha-f)^4}\geq 0.
\end{aligned}
\end{equation*}
This, together with (\ref{dgenhjfy}), yields the desired estimate
(\ref{lemmaequ3}).

\end{proof}
\section{Proof of Theorem \ref{main}}\label{sec3}
In this section, we will use Lemma \ref{Le11} and the localization
technique of Souplet-Zhang \cite{[Sou-Zh]} to give the elliptic type
gradient estimates on the positive and bounded smooth solutions of
the diffusion equation ({\ref{equ1}}).
\begin{proof}
First we give the well-known cut-off function by Li-Yau
\cite{[Li-Yau]} (see also \cite{[Sou-Zh]}) as follows. We caution
the reader that the calculation is not the same as that in
\cite{[Li-Yau]} due to the difference of the first-order term.

Let $\psi=\psi(x,t)$ be a smooth cut-off function supported in
$Q_{R,T}$ satisfying the following properties:
\begin{enumerate}
\item[(1)]$\psi=\psi(d(x,x_0),t)\equiv \psi(r,t)$; $\psi(x,t)=1$ in
$Q_{R/2,T/2}$, $0\leq\psi\leq1$;
\end{enumerate}

\begin{enumerate}
\item[(2)]$\psi$ is decreasing as a radial function in the spatial
variables;
\end{enumerate}

\begin{enumerate}
\item[(3)]$\frac{|\partial_r
\psi|}{\psi^\epsilon}\leq\frac{C_\epsilon}{R}$, $\frac{|\partial^2_r
\psi|}{\psi^\epsilon}\leq\frac{C_\epsilon}{R^2}$, when
$0<\epsilon<1$;
\end{enumerate}

\begin{enumerate}
\item[(4)]$\frac{|\partial_t \psi|}{\psi^{1/2}}\leq\frac{C}{T}$.
\end{enumerate}

From Lemma \ref{Le11}, by a straight forward calculation, we have
\begin{equation}
\begin{aligned}\label{lemdx3}
&L(\psi\omega)-\frac{2(1-\alpha)+2f}{\alpha-f}\nabla
f\cdot\nabla(\psi\omega) -2\frac{\nabla\psi}{\psi}
\cdot\nabla(\psi\omega)-(\psi\omega)_t\\
\geq&2\psi(\alpha-f)\omega^2
-\left[\frac{2(1-\alpha)+2f}{\alpha-f}\nabla f\cdot\nabla\psi\right]
\omega-2\frac{|\nabla\psi|^2}{\psi}\omega\\
&+(L\psi)\omega-\psi_t\omega
+2(a-K)\psi\omega+2\frac{af}{\alpha-f}\psi\omega.
\end{aligned}
\end{equation}
Let $(x_1,t_1)$ be a point where $\psi\omega$  achieves the maximum.
By Li-Yau \cite{[Li-Yau]}, without loss of generality we assume that
$x_1$ is not in the cut-locus of $M$. Then at this point, we have
\begin{equation*}
\begin{aligned}
L(\psi\omega)\leq0,\,\,\,\,\,\,(\psi\omega)_t\geq0,
\,\,\,\,\,\,\nabla(\psi\omega)=0.
\end{aligned}
\end{equation*}
Hence at $(x_1,t_1)$, by (\ref{lemdx3}), we get
\begin{equation}
\begin{aligned}\label{lefor}
2\psi(\alpha-f)\omega^2(x_1,t_1)&\leq
\Bigg\{\left[\frac{2(1-\alpha)+2f}{\alpha-f}\nabla
f\cdot\nabla\psi\right]\omega
+2\frac{|\nabla\psi|^2}{\psi}\omega-(L\psi)\omega\\
&\,\,\,\,\,\,\,\,\,+\psi_t\omega-2(a-K)\psi\omega
-2\frac{af}{\alpha-f}\psi\omega\Bigg\}(x_1,t_1).
\end{aligned}
\end{equation}
In the following, we will introduce the upper bounds for each term
of the right-hand side (RHS) of (\ref{lefor}). Following similar
arguments of Souplet-Zhang (\cite{[Sou-Zh]}, pp. 1050-1051), we have
the estimates of the first term of the BHS of (\ref{lefor})
\begin{equation}
\begin{aligned}\label{term1}
&\left[\frac{2f}{\alpha-f}\nabla f\cdot\nabla\psi\right]\omega\\
\leq&2|f|\cdot|\nabla\psi|\cdot\omega^{3/2}=2\left[\psi(\alpha-f)\omega^2\right]^{3/4}
\cdot\frac{|f|\cdot|\nabla\psi|}{[\psi(\alpha-f)]^{3/4}}\\
\leq&\psi(\alpha-f)\omega^2+\tilde{c}
\frac{(f|\nabla\psi|)^4}{[\psi(\alpha-f)]^3}
\leq\psi(\alpha-f)\omega^2+\tilde{c}\frac{f^4}{R^4(\alpha-f)^3}
\end{aligned}
\end{equation}
and
\begin{equation}
\begin{aligned}\label{tianjia}
&\left[\frac{2(1-\alpha)}{\alpha-f}\nabla f
\cdot\nabla\psi\right]\omega\\
\leq&2|1-\alpha||\nabla\psi|\omega^{3/2}
=(\psi\omega^2)^{3/4}\cdot\frac{2|1-\alpha||\nabla\psi|}{\psi^{3/4}}\\
\leq&\frac{\delta}{12}\psi\omega^2+c(\alpha,\delta)
\left(\frac{|\nabla\psi|}{\psi^{3/4}}\right)^4
\leq\frac{\delta}{12}\psi\omega^2+\frac{c(\alpha,\delta)}{R^4}.
\end{aligned}
\end{equation}
For the second term of the RHS of (\ref{lefor}), we have
\begin{equation}
\begin{aligned}\label{term2}
2\frac{|\nabla\psi|^2}{\psi}\omega
&=2\psi^{1/2}\omega\cdot\frac{|\nabla\psi|^2}{\psi^{3/2}}
\leq\frac{\delta}{12}\psi\omega^2+c(\delta)
\left(\frac{|\nabla\psi|^2}{\psi^{3/2}}\right)^2\\
&\leq\frac{\delta}{12}\psi\omega^2+\frac{c(\delta)}{R^4}.
\end{aligned}
\end{equation}
For the third term of the RHS of (\ref{lefor}), since
$Ric_{m,n}(L)\geq-K$, by the generalized Laplacian comparison
theorem (see \cite{[BE2]} or \cite{[LD]}),
\[
Lr\leq(m-1)\sqrt{K}\coth(\sqrt{K}r).
\]
Consequently, we have
\begin{equation}
\begin{aligned}\label{term3}
-(L\psi)\omega&=-\left[(\partial_r\psi)Lr+(\partial^2_r\psi)\cdot
|\nabla r|^2\right]\omega\\
&\leq-\left[\partial_r\psi(m-1)\sqrt{K}\coth(\sqrt{K}r)
+\partial^2_r\psi\right]\omega\\
&\leq-\left[\partial_r\psi(m-1)\left(\frac1r+\sqrt{K}\right)
+\partial^2_r\psi\right]\omega\\
&\leq\left[|\partial^2_r\psi|+2(m-1)\frac{|\partial_r\psi|}{R}
+(m-1)\sqrt{K}|\partial_r\psi|\right]\omega\\
&\leq\psi^{1/2}\omega\frac{|\partial^2_r\psi|}{\psi^{1/2}}+\psi^{1/2}
\omega2(m-1)\frac{|\partial_r\psi|}{R\psi^{1/2}}+\psi^{1/2}\omega
(m-1)\frac{\sqrt{K}|\partial_r\psi|}{\psi^{1/2}}\\
&\leq\frac{\delta}{12}\psi\omega^2+c(\delta,m)
\left[\left(\frac{|\partial^2_r\psi|}{\psi^{1/2}}\right)^2
+\left(\frac{|\partial_r\psi|}{R\psi^{1/2}}\right)^2
+\left(\frac{\sqrt{K}|\partial_r\psi|}{\psi^{1/2}}\right)^2\right]\\
&\leq\frac{\delta}{12}\psi\omega^2+\frac{c(\delta,m)}{R^4}
+\frac{c(\delta,m)K}{R^2}.
\end{aligned}
\end{equation}
Now we estimate the fourth term:
\begin{equation}
\begin{aligned}\label{term4}
|\psi_t|\omega&=\psi^{1/2}\omega\frac{|\psi_t|}{\psi^{1/2}}
\leq\frac{\delta}{12}\left(\psi^{1/2}\omega\right)^2+c(\delta)
\left(\frac{|\psi_t|}{\psi^{1/2}}\right)^2\\
&\leq\frac{\delta}{12}\psi\omega^2+\frac{c(\delta)}{T^2}.
\end{aligned}
\end{equation}
Notice that we have used Young's inequality below in obtaining
(\ref{term1})-(\ref{term4}):
\[
ab\leq\frac{a^p}{p}+\frac{b^q}{q},\,\,\,\,\,\, \forall\,\,\, p,q>0
\,\,\,\mathrm{with}\,\,\, \frac1p+\frac1q=1.
\]
Finally, we estimate the last two terms:
\begin{equation}
\begin{aligned}\label{term5}
-2(a-K)\psi\omega\leq2(|a|+K)\psi\omega\leq\frac{\delta}{12}\psi\omega^2+c(\delta)(|a|+K)^2;
\end{aligned}
\end{equation}
and
\begin{equation}\label{term6}
-2\frac{af}{\alpha-f}\psi\omega\leq2\frac{|a|\cdot|f|}{\alpha-f}\psi\omega
\leq\frac{\delta}{12}\psi\omega^2+c(\delta)a^2\frac{f^2}{(\alpha-f)^2}.
\end{equation}
Substituting (\ref{term1})-(\ref{term6}) to the RHS of (\ref{lefor})
at $(x_1,t_1)$, we get
\begin{equation}
\begin{aligned}\label{leforfor}
2\psi(\alpha-f)\omega^2&\leq \psi(\alpha-f)\omega^2
+\frac{\tilde{c}f^4}{R^4(\alpha-f)^3}+\frac{\delta}{2}\psi\omega^2
+\frac{c(\alpha,\delta)}{R^4}+\frac{c(\delta)}{R^4}+\frac{c(\delta,m)}{R^4}\\
&\,\,\,\,\,\, +\frac{c(\delta,m)K}{R^2}+\frac{c(\delta)}{T^2}
+c(\delta)(|a|+K)^2+c(\delta)a^2\frac{f^2}{(\alpha-f)^2}.
\end{aligned}
\end{equation}
Recall that $\alpha-f \geq\delta>0$, (\ref{leforfor}) implies
\begin{equation}
\begin{aligned}\label{lefogong}
\psi\omega^2(x_1,t_1)&\leq
\tilde{c}\frac{f^4}{R^4(\alpha-f)^4}+\frac{1}{2}\psi\omega^2(x_1,t_1)
+\frac{c(\alpha,\delta)}{R^4}+\frac{c(\delta,m)}{R^4}+\frac{c(\delta,m)K}{R^2}\\
&\,\,\,\,\,\,+\frac{c(\delta)}{T^2}
+c(\delta)(|a|+K)^2+c(\delta)a^2\frac{f^2}{(\alpha-f)^2}.
\end{aligned}
\end{equation}
Furthermore, we need to estimate the RHS of (\ref{lefogong}). If
$f\leq0$ and $\alpha\geq0$, then we have
\begin{equation}
\begin{aligned}\label{gujief1}
\frac{f^4}{(\alpha-f)^4}\leq
1,\,\,\,\,\,\,\,\,\,\,\,\,\frac{f^2}{(\alpha-f)^2}\leq 1;
\end{aligned}
\end{equation}
if $f>0$, by the assumption $\alpha-f \geq\delta>0$, we know that
\begin{equation}
\begin{aligned}\label{gujief2}
\frac{f^4}{(\alpha-f)^4}\leq
\frac{(\alpha-\delta)^4}{\delta^4}=\left(\frac{\alpha}{\delta}-1\right)^4,
\,\,\,\,\,\,\,\,\,\,\,\,\frac{f^2}{(\alpha-f)^2}\leq
\left(\frac{\alpha}{\delta}-1\right)^2.
\end{aligned}
\end{equation}
Plugging (\ref{gujief1}) (or (\ref{gujief2})) into (\ref{lefogong}),
we obtain
\begin{equation}
\begin{aligned}\label{lefogongdd}
(\psi\omega^2)(x_1,t_1)\leq
\frac{\tilde{c}\beta^4+c(\alpha,\delta,m)}{R^4}
+\frac{c(\delta,m)K}{R^2}+\frac{c(\delta)}{T^2}
+c(\delta)(|a|+K)^2+c(\delta)a^2\beta^2,
\end{aligned}
\end{equation}
where $\beta:=\max\left\{1,|\alpha/\delta-1|\right\}$.

\noindent The above inequality implies, for all $(x,t)$ in $Q_{R,T}$
\begin{equation}
\begin{aligned}
(\psi^2\omega^2)(x,t)&\leq\psi^2(x_1,t_1)\omega^2(x_1,t_1)
\leq\psi(x_1,t_1)\omega^2(x_1,t_1)\\
&\leq\frac{\tilde{c}\beta^4+c(\alpha,\delta,m)}{R^4}
+\frac{c(\delta,m)K}{R^2}+\frac{c(\delta)}{T^2}
+c(\delta)(|a|+K)^2+c(\delta)a^2\beta^2.
\end{aligned}
\end{equation}
Note that $\psi(x,t)=1$ in $Q_{R/2,T/2}$ and $\omega={|\nabla
f|^2}/{(\alpha-f)^2}$. Therefore we have
\begin{equation}
\begin{aligned}\label{conclus1.}
\frac{|\nabla f|}{\alpha-f}\leq
\left(\frac{\tilde{c}\beta^4+c(\alpha,\delta,m)}{R^4}
+\frac{c(\delta,m)K}{R^2}+\frac{c(\delta)}{T^2}
+c(\delta)(|a|+K)^2+c(\delta)a^2\beta^2\right)^{1/4}.
\end{aligned}
\end{equation}
Since $f=\log u$, we get the following estimate for Eq.
(\ref{maineq})
\begin{equation}
\begin{aligned}\label{conclus2.}
\frac{|\nabla u|}{u}\leq
\left(\frac{\tilde{c}\beta^4+c(\alpha,\delta,m)}{R^4}
+\frac{c(\delta)}{T^2} +c(\delta)(|a|+K)^2
+c(\delta)a^2\beta^2\right)^{1/4}\Big(\alpha-\log u\Big).
\end{aligned}
\end{equation}
Replacing $u$ by $e^{b/a}u$ gives the desired estimate
(\ref{theor2}). This completes the proof of Theorem \ref{main}.
\end{proof}

\section{Proof of Corollary \ref{Coro}}\label{Sec4}

\begin{proof}
The proof is similar to that of Theorem \ref{main}. We still use the
technique of a cut-off function in a local neighborhood of
Riemannian manifolds. For $0<u\leq1$, we let $f=\log u$. Then
$f\leq0$. Set
\[
\omega:=|\nabla\log(1-f)|^2=\frac{|\nabla f|^2}{(1-f)^2}.
\]
By Lemma \ref{Le11}, we have
\begin{equation}
\begin{aligned}\label{corrformu}
\left(L-\frac{\partial}{\partial t}\right)
\omega\geq\frac{2f}{1-f}\left\langle \nabla
f,\nabla\omega\right\rangle+2(1-f)\omega^2-2(|a|+K)\omega.
\end{aligned}
\end{equation}
We define a smooth cut-off function $\psi=\psi(x,t)$ in the same way
as Section \ref{sec3}. Follow all steps in the last section (see
also pp. 1050-1051 in \cite{[Sou-Zh]}), we can easily get the
following inequality
\begin{equation}
\begin{aligned}\label{jiezhi}
2(1-f)\psi\omega^2&\leq(1-f)\psi\omega^2+\frac{cf^4}{R^4(1-f)^3}
+\frac{\psi\omega^2}{2}+\frac{c}{R^4}\\
&\,\,\,\,\,\,+\frac{c(m)}{R^4}+\frac{c(m)K}{R^2}+\frac{c}{T^2}
+c(|a|+K)^2,
\end{aligned}
\end{equation}
where we used similar estimates (\ref{term1})-(\ref{term6}) with the
difference that these estimates do not contain the parameter
$\delta$. Using the same method as that in proving Theorem
\ref{main}, for all $(x,t)$ in $Q_{R/2, T/2}$ we can get
\begin{equation}
\begin{aligned}\label{jiezhi}
\omega^2(x,t)&\leq\frac{c(m)}{R^4}+\frac{c(m)K}{R^2}
+\frac{c}{T^2}+c(|a|+K)^2\\
&\leq\frac{c(m)}{R^4}+\frac{c(m)}{R^2}(|a|+K)+\frac{c}{T^2}+c(|a|+K)^2\\
&\leq\frac{c(m)}{R^4}+\frac{c}{T^2}+c(|a|+K)^2.
\end{aligned}
\end{equation}
Again, using the same argument in the proof of Theorem \ref{main}
gives
\begin{equation}\label{zlihou}
\frac{|\nabla f|}{1-f}\leq \frac
{c(m)}{R}+\frac{c}{\sqrt{T}}+c\sqrt{K+|a|},
\end{equation}
where $c$ is a constant depending only on $n$, $c(m)$ is a constant
depending only on $n$ and $m$.

Since $f=\log u$, we get
\begin{equation}\label{zlihou2}
\frac{|\nabla u|}{u}\leq\left(\frac
{c(m)}{R}+\frac{c}{\sqrt{T}}+c\sqrt{K+|a|}\right)\cdot\left(1+\log{\frac1u}\right).
\end{equation}
At last, replacing $u$ by $e^{b/a}u$ above yields (\ref{cor}).
\end{proof}

\section*{Acknowledgment}
The author would like to thank Professor Yu Zheng for his helpful
suggestions on this problem, and for his encouragement. He would
also like to thank the referees for useful suggestions. This work is
partially supported by the NSFC10871069.

\bibliographystyle{amsplain}

\end{document}